\newtheorem{theorem}{Theorem}
\newtheorem{prop}{Proposition}
\newtheorem{lemma}{Lemma}
\newtheorem{exmp}{Example}
\newtheorem{cor}{Corollary}
\begin{document}

\title{On the distance between linear codes}
\author{Mariusz Kwiatkowski, Mark Pankov}
\subjclass[2000]{}
\keywords{linear code, Grassmann graph}
\address{Department of Mathematics and Computer Science, 
University of Warmia and Mazury, S{\l}oneczna 54, Olsztyn, Poland}
\email{mkw@matman.uwm.edu.pl, pankov@matman.uwm.edu.pl}

\maketitle
\begin{abstract}
Let $V$ be an $n$-dimensional vector space over 
the finite field consisting of $q$ elements
and let $\Gamma_{k}(V)$ be the Grassmann graph 
formed by $k$-dimensional subspaces of $V$, $1<k<n-1$.
Denote by $\Gamma(n,k)_{q}$ the restriction of $\Gamma_{k}(V)$
to the set of all non-degenerate linear $[n,k]_{q}$ codes.
We show that for any two codes the distance in
$\Gamma(n,k)_{q}$ coincides with the distance in $\Gamma_{k}(V)$ 
only in the case when
$n<(q+1)^2+k-2$,
i.e. if $n$ is sufficiently large then for some pairs of codes
the distances in the graphs $\Gamma_{k}(V)$ and $\Gamma(n,k)_{q}$ are distinct.
We describe one class of such pairs.
\end{abstract}

\section{Introduction}
The distance between two vertices in a connected simple graph $\Gamma$
is the smallest number $m$ such that there is a path of length $m$,
i.e. a path consisting of $m$ edges, which joins these vertices
\cite[Section 15.1]{DD}.
If $\Gamma'$ is a connected subgraph of $\Gamma$
then the distance between two vertices of $\Gamma'$
may be greater than the distance between these vertices in $\Gamma$.

Consider the Grassmann graph $\Gamma_{k}(V)$
formed by $k$-dimensional subspaces of an $n$-dimensional vector space $V$.
Two $k$-dimensional subspaces are adjacent vertices of $\Gamma_{k}(V)$
if their intersection is $(k-1)$-dimensional.
We suppose that $1<k<n-1$,
since for $k=1,n-1$ any two distinct vertices in the Grassmann graph are adjacent.
The Grassmann graph is connected and the distance between two 
$k$-dimensional subspaces $X,Y$ is equal to $k-\dim(X\cap Y)$.
See \cite{BCN-book,P1,P2} for more information.

The set of all $k$-dimensional subspaces complementary 
to a given $(n-k)$-dimen\-sional subspace can be identified with 
the set of all  matrices of dimension $(n-k)\times k$. 
The restriction of $\Gamma_{k}(V)$ to this set is isomorphic to the graph,
where two matrices $A$ and $B$ are adjacent 
if the rank of $A-B$ is equal to $1$.
This matrix graph is connected and the distance between two matrices $A,B$ is equal 
to the rank of $A-B$, i.e. it coincides with
the distance between the corresponding subspaces in the Grassmann graph \cite{Wan}.

In this paper we suppose that $V$ is a vector space over 
the finite field consisting of $q$ elements and 
denote by $\Gamma(n,k)_{q}$ the restriction of the Grassmann graph to 
the set of all non-degenerate linear $[n,k]_{q}$ codes \cite{TVN}. 
It is not difficult to prove that this graph is connected and 
the distance between two vertices is equal or  grater by one than 
the distance between these vertices in the Grassmann graph.
Our first result (Theorem 1) says that 
all distances in the graph $\Gamma(n,k)_{q}$ coincide with 
the distances in the Grassmann graph
if and only if 
$$n<(q+1)^2+k-2.$$
So, if $n$ is sufficiently large then
for some pairs of linear $[n,k]_{q}$ codes the distance in $\Gamma(n,k)_{q}$ 
is greater  than the distance in the Grassmann graph.
A class of such pairs will be described by the second result (Theorem 2).

\section{The graph of linear codes}
Let $\mathbb{F}_{q}$ be the finite field consisting of $q$ elements.
Consider the $n$-dimensional vector space 
$$V=\underbrace{{\mathbb F}_{q}\times\dots \times{\mathbb F}_{q}}_{n}$$
over this field.
The standard base of $V$ is formed by the vectors 
$$e_{1}=(1,0,\dots,0),\dots,e_{n}=(0,\dots,0,1).$$
We denote by $c_{i}$ the $i$-th coordinate functional
$$(x_{1},\dots,x_{n})\to x_{i}.$$
and write $C_{i}$ for its kernel. 

A {\it linear} $[n,k]_{q}$ {\it code} is a $k$-dimensional subspaces of $V$.
Following \cite{TVN} we restrict ourself to the case of non-degenerate linear codes.
A linear $[n,k]_{q}$ code $C\subset V$ is {\it non-degenerate} if
the restriction of every $c_{i}$ to $C$ is non-zero, i.e. $C$ is not contained in any $C_{i}$.

Let ${\mathcal G}_{k}(V)$ be the Grassmannian consisting of 
all $k$-dimensional subspaces of $V$. 
The {\it Grassmann graph} $\Gamma_{k}(V)$
is the graph whose vertex set is ${\mathcal G}_{k}(V)$ and 
two $k$-dimensional subspaces are adjacent vertices of this graph
if their intersection is $(k-1)$-dimensional.
The graph is connected and the distance between $X,Y\in {\mathcal G}_{k}(V)$
in this graph is equal to 
$$k-\dim(X\cap Y)=\dim(X+Y)-k.$$
We denote this distance by $d(X,Y)$.
The number 
$$m(n,k)=k-\min\{k,n-k\}$$
is the smallest dimension for the intersection of two $k$-dimensional subspaces
and the diameter of $\Gamma_{k}(V)$ is equal to $\min\{k,n-k\}$.

The number of $k$-dimensional subspaces of $V$ is equal to the Gaussian coefficient
$$\genfrac{\lbrack}{\rbrack}{0pt}{}{n}{k}_q=
\frac{(q^n-1)(q^{n-1}-1)\cdots(q^{n-k+1}-1)}{(q-1)(q^2-1)\cdots(q^k-1)};$$
in particular, the number of $1$-dimensional subspaces 
and the number of hyperplanes in $V$ are equal to
$$\genfrac{[}{]}{0pt}{}{n}{1}_q=\genfrac{\lbrack}{\rbrack}{0pt}{}{n}{n-1}_q=
[n]_{q}=\frac{q^n-1}{q-1}.$$
All (non-degenerate) linear $[n,k]_{q}$ codes form the set 
$${\mathcal C}(n,k)_{q}=
{\mathcal G}_{k}(V)\setminus \left(\bigcup_{i=1}^{n}{\mathcal G}_{k}(C_{i}) \right).$$
Using the inclusion-exclusion principle, we can show that this set contains precisely
$$\sum_{i=0}^{n-k}(-1)^i\binom{n}{i}\genfrac{\lbrack}{\rbrack}{0pt}{}{n-i}{k}_q$$
elements.
We write $\Gamma(n,k)_{q}$ for the restriction of the Grassmann graph $\Gamma_{k}(V)$ 
to ${\mathcal C}(n,k)_{q}$.

\begin{prop}
The graph  $\Gamma(n,k)_{q}$ is connected.
\end{prop}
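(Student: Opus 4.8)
The plan is to exhibit, for every non-degenerate code, a walk in $\Gamma(n,k)_{q}$ joining it to one fixed non-degenerate code $C_{*}$; concatenating two such walks then joins any two vertices. I take $C_{*}$ to be the row space of $[\,I_{k}\mid J\,]$, where $J$ is the all-ones $k\times(n-k)$ matrix (every column of $[\,I_{k}\mid J\,]$ is nonzero, so $C_{*}\in{\mathcal C}(n,k)_{q}$). The construction has two stages: first move an arbitrary non-degenerate code into the ``standard chart'' of codes complementary to $\bigcap_{i>k}C_{i}$, then move around inside that chart.

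\emph{Stage 1.} For a $k$-element set $S\subseteq\{1,\dots,n\}$ say that a code is \emph{in position $S$} if it is complementary to $\bigcap_{i\in S}C_{i}$, equivalently if it has a generator matrix whose $S$-columns form the identity; every code is in position $S$ for some $S$, since a generator matrix has $k$ linearly independent columns. I would show: if $C\in{\mathcal C}(n,k)_{q}$ is in position $S$ and $S\ne\{1,\dots,k\}$, then $C$ is joined in $\Gamma(n,k)_{q}$ — by an edge, or by nothing at all — to a code in some position $S'$ with $|S'\cap\{1,\dots,k\}|=|S\cap\{1,\dots,k\}|+1$; iterating this at most $k$ times lands in position $\{1,\dots,k\}$. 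To perform one step, pick $a\in S\setminus\{1,\dots,k\}$ and $b\in\{1,\dots,k\}\setminus S$, let $\{f_{s}\}_{s\in S}$ be the basis of $C$ with $c_{t}(f_{s})=\delta_{st}$ for $t\in S$, and set $S'=(S\setminus\{a\})\cup\{b\}$. If $c_{b}(f_{a})\ne0$ then $C$ is already in position $S'$ and nothing is done; otherwise put
\[
C'=\langle f_{s}:s\in S\setminus\{a\}\rangle+\langle f_{a}+e_{b}\rangle .
\]
One checks that $C\cap C'=\langle f_{s}:s\in S\setminus\{a\}\rangle$ has dimension $k-1$, so $C$ and $C'$ are adjacent in $\Gamma_{k}(V)$; that $C'$ is in position $S'$; and — the point that needs care — that $C'$ is non-degenerate. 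For a coordinate $i$, the functional $c_{i}$ is already nonzero on $\langle f_{s}:s\in S\setminus\{a\}\rangle$ unless $c_{i}(f_{s})=0$ for all $s\in S\setminus\{a\}$, and then $c_{i}(f_{a})\ne0$ because $c_{i}$ is nonzero on $C$, whence $c_{i}(f_{a}+e_{b})=c_{i}(f_{a})\ne0$ for $i\ne b$; the cases $i=a$ and $i=b$ are immediate from $c_{a}(f_{a}+e_{b})=c_{b}(f_{a}+e_{b})=1$.

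\emph{Stage 2.} Codes in position $\{1,\dots,k\}$ are exactly the row spaces of the matrices $[\,I_{k}\mid M\,]$, $M\in\mathbb F_{q}^{k\times(n-k)}$, two of them being adjacent in $\Gamma_{k}(V)$ iff $\operatorname{rank}(M-M')=1$ (this is the matrix-graph description recalled in the introduction), and such a code is non-degenerate iff $M$ has no zero column. I would connect an arbitrary such $M$ to $J$ column by column: once columns $1,\dots,j-1$ of $M$ equal the all-ones vector $\mathbf{1}$, apply $M\mapsto M+uw^{\mathsf T}$ with $u=\mathbf{1}-(j\text{th column of }M)$, $w_{j}=1$, $w_{i}=0$ for $i<j$, and, for $i>j$, $w_{i}$ chosen to avoid the at most one value that would make the new $i$th column zero (possible since $q\ge2$). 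This keeps every column nonzero and leaves columns $<j$ fixed, so it is an edge of $\Gamma(n,k)_{q}$ — or nothing, if the $j$th column was already $\mathbf{1}$ — and after $n-k$ steps $M$ has become $J$, i.e.\ the code has become $C_{*}$.

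\emph{Where the difficulty lies.} The statement looks routine but is not, precisely because of what Theorem~1 will make explicit: one may \emph{not} join two non-degenerate codes along a geodesic of $\Gamma_{k}(V)$, since for large $n$ every shortest path between them can be forced through degenerate codes. Hence the connecting walks must be allowed to be long, and the real work is to construct the individual ``repair'' moves — the substitution $f_{a}\mapsto f_{a}+e_{b}$ of Stage~1 and the rank-one column corrections of Stage~2 — and to verify at each step that no coordinate functional dies on the new code; it is here that the trivial inequalities $q\ge2$ and $k\ge1$ give exactly the room one needs.
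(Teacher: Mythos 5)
Your argument is correct, but it takes a genuinely different route from the paper's. The paper anchors everything on a vector of weight $n$: if $X\cup Y$ contains such a vector $x$, one walks from $Y$ to $X$ along a geodesic of $\Gamma_{k}(V)$ every vertex of which contains $x$ and is therefore automatically non-degenerate; if not, one first replaces $Y$ by an adjacent $Y'$ spanned by a weight-$n$ vector and a hyperplane of $Y$ through $X\cap Y$, with $X\cap Y'=X\cap Y$. You instead connect every vertex to one fixed code $C_{*}$ by a pivoting argument — basis substitutions $f_{a}\mapsto f_{a}+e_{b}$ to reach the standard chart, then rank-one column corrections inside it — and the delicate points (that each move is an edge of $\Gamma_{k}(V)$, lands in the intended position, and preserves non-degeneracy) are all verified soundly. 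The trade-off is quantitative: the paper's near-geodesic walk has length $d(X,Y)$ or $d(X,Y)+1$, which is exactly the content of Corollary 1, the estimate $d(X,Y)\le d_{c}(X,Y)\le d(X,Y)+1$ on which Theorems 1 and 2 rest; your walk may have length up to roughly $2n$ and yields no such bound, so it proves the Proposition as stated but could not replace the paper's proof in its structural role. In exchange, your construction is more algorithmic (Gaussian-elimination flavoured), does not need to manufacture weight-$n$ vectors in convenient places, and makes explicit the matrix-chart picture that the paper only mentions in passing in the introduction.
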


\begin{proof}
Let $X,Y\in  {\mathcal C}(n,k)_{q}$. 
We suppose that $\dim X\cap Y$ is not greater than $k-2$,
i.e. $X$ and $Y$ are non-adjacent.
Consider the case when one of our codes, say $X$, contains a vector $x$ of weight $n$
(the weight $||x||$ is the number of non-zero coordinates in $x$).
We take any hyperplane $A\subset Y$ containing 
$X\cap Y$.
If $x\not\in X\cap Y$ then we denote by $Y_{1}$
the $k$-dimensional subspace spanned by $A$ and $x$.
In the case when $x\in X\cap Y$, we define $Y_{1}$ as 
the $k$-dimensional space spanned by $A$ and a certain vector in $X\setminus Y$.
For each of these cases we have 
$$\dim(X\cap Y_{1})=\dim (X\cap Y)+1$$
and $Y,Y_{1}$ are adjacent.
Step by step, we construct a path
$$Y,Y_{1},\dots,Y_{i}=X,\;\;\;i=d(X,Y)$$
in $\Gamma_{k}(V)$ such that every $Y_{j}$ contains the vector $x$.
The latter guarantees that all $Y_{j}$ belong to ${\mathcal C}(n,k)_{q}$.
So, $X$ and $Y$ are connected in $\Gamma(n,k)_{q}$.

If $X\cup Y$ does not contain vectors of weight $n$ then 
we consider any $k$-dimensional subspace $Y'$ spanned by 
a vector of weight $n$ and
a hyperplane of $Y$ containing $X\cap Y$.
Then $Y'$ belongs to ${\mathcal C}(n,k)_{q}$ and $X\cap Y=X\cap Y'$.
By the arguments given above, $\Gamma(n,k)_{q}$ contains a path
$$Y',Y_{1},\dots,Y_{i}=X,\;\;\;i=d(X,Y)=d(X,Y').$$
Note that $Y$ and $Y'$ are adjacent.
\end{proof}

For any $X,Y\in {\mathcal C}(n,k)_{q}$ we denote by $d_{c}(X,Y)$
the distance between $X$ and $Y$ in the graph $\Gamma(n,k)_{q}$.
Arguments from the proof of Proposition 1 give the following.
\begin{cor}
Let $X,Y\in {\mathcal C}(n,k)_{q}$. Then
$$d_{c}(X,Y)=d(X,Y)$$
if $X\cup Y$ contains a vector of weight $n$;
otherwise, we have
$$d(X,Y)\le d_{c}(X,Y)\le d(X,Y)+1.$$
\end{cor}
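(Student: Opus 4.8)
The plan is to derive everything from the proof of Proposition~1, using in addition only the trivial fact that $\Gamma(n,k)_{q}$ is a subgraph of $\Gamma_{k}(V)$. First I would record the lower bound $d(X,Y)\le d_{c}(X,Y)$, valid for all $X,Y\in{\mathcal C}(n,k)_{q}$: any path joining $X$ and $Y$ inside $\Gamma(n,k)_{q}$ is a path of the same length in $\Gamma_{k}(V)$, so the shortest one has length at least $d(X,Y)$. This takes care of the left-hand inequality in both cases, and it remains to bound $d_{c}(X,Y)$ from above.

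Suppose first that $X\cup Y$ contains a vector $x$ of weight $n$. Since $d$ and $d_{c}$ are symmetric we may assume $x\in X$, and we may assume $X\ne Y$. I would then quote directly the construction in the first part of the proof of Proposition~1: it yields a path
$$Y=Y_{0},Y_{1},\dots,Y_{i}=X,\qquad i=d(X,Y),$$
in $\Gamma_{k}(V)$ along which $\dim(X\cap Y_{j})=\dim(X\cap Y)+j$ and every $Y_{j}$ contains $x$. As $x$ has weight $n$, no $Y_{j}$ is contained in a coordinate hyperplane, so each $Y_{j}$ lies in ${\mathcal C}(n,k)_{q}$ (for the endpoints this is part of the hypothesis). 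Hence this is a path in $\Gamma(n,k)_{q}$ and $d_{c}(X,Y)\le i=d(X,Y)$; combined with the lower bound, $d_{c}(X,Y)=d(X,Y)$.

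Now assume $X\cup Y$ contains no vector of weight $n$. If $d(X,Y)\le 1$ then $X=Y$ or $X$ and $Y$ are adjacent, and in both cases $d_{c}(X,Y)=d(X,Y)$, which lies in the asserted range; so assume $\dim(X\cap Y)\le k-2$. Following the second part of the proof of Proposition~1, I would fix a hyperplane $A$ of $Y$ with $X\cap Y\subseteq A$ and the all-ones vector $v=(1,\dots,1)$, which has weight $n$. Since $Y$ contains no vector of weight $n$ we have $v\notin Y$, hence $v\notin A$, so $Y'=\langle A,v\rangle$ is $k$-dimensional; because $v\in Y'$ has weight $n$, $Y'\in{\mathcal C}(n,k)_{q}$; because $v\notin Y$, $Y\cap Y'=A$, so $Y$ and $Y'$ are adjacent; and since $X\cap Y\subseteq A\subseteq Y'$ we get $X\cap Y\subseteq X\cap Y'$, hence $d(X,Y')\le d(X,Y)$. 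The pair $X,Y'$ falls under the previous case, so $d_{c}(X,Y')=d(X,Y')$, and therefore
$$d_{c}(X,Y)\le d_{c}(X,Y')+1=d(X,Y')+1\le d(X,Y)+1.$$
With the lower bound this finishes the proof.

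I expect the only genuinely delicate point to be the last step, where one should resist copying Proposition~1 verbatim: its proof asserts $X\cap Y=X\cap Y'$, which in general forces a careful choice of $v$ outside $X+A$, whereas for the present estimate only the automatic inclusion $X\cap Y\subseteq X\cap Y'$ is needed, so that subtlety can be bypassed entirely. The rest is routine verification that the auxiliary subspaces have dimension $k$, are non-degenerate, and are adjacent in the appropriate graph.
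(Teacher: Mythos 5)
Your proof is correct and follows essentially the same route as the paper, which offers no separate argument for the corollary beyond the remark that it follows from the proof of Proposition~1. Your observation at the end is well taken: the paper's claim $X\cap Y=X\cap Y'$ does require choosing the weight-$n$ vector outside $X+A$, whereas for the stated bound the automatic inclusion $X\cap Y\subseteq X\cap Y'$ (hence $d(X,Y')\le d(X,Y)$) suffices, so your version is a slight but genuine tightening of the argument.
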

 
The main results of the paper are the following statements.

\begin{theorem}
If 
\begin{equation}\label{eq1}
 n<(q+1)^{2}+k-2
\end{equation}
then
$$d_{c}(X,Y)=d(X,Y)$$ 
for all $X,Y\in {\mathcal C}(n,k)_{q}$.
\end{theorem}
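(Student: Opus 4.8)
The plan is to argue by induction on the Grassmann distance $m=d(X,Y)$. For $m=0$ there is nothing to prove, and for $m=1$ the codes $X,Y$ are adjacent in $\Gamma_{k}(V)$, hence also adjacent in $\Gamma(n,k)_{q}$, so $d_{c}(X,Y)=1$. For the inductive step fix $m\ge 2$ and assume the statement for all pairs of non-degenerate codes at distance less than $m$. It is enough to produce a code $Z\in{\mathcal C}(n,k)_{q}$ with $d(X,Z)=m-1$ that is adjacent to $Y$: then $d_{c}(X,Z)=m-1$ by the inductive hypothesis and $d_{c}(Z,Y)=1$, so $d_{c}(X,Y)\le m$, while $d_{c}(X,Y)\ge d(X,Y)=m$ always holds.

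To construct such a $Z$, set $U=X\cap Y$, so that $\dim U=k-m$, and fix subspaces $X',Y'$ complementary to $U$ in $X$ and $Y$ respectively; then $\dim X'=\dim Y'=m$ and $X+Y=U\oplus X'\oplus Y'$. For every $1$-dimensional $P\subseteq X'$ and every $(m-1)$-dimensional $Q\subseteq Y'$ the subspace $Z:=U\oplus P\oplus Q$ has dimension $k$, with $Z\cap X=U\oplus P$ and $Z\cap Y=U\oplus Q$; hence $d(X,Z)=m-1$ and $Z$ is adjacent to $Y$. It remains to choose $P$ and $Q$ so that $Z$ is non-degenerate. Let $S=\{\,i:C_{i}\supseteq U\,\}$. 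For $i\notin S$ the functional $c_{i}$ is non-zero already on $U\subseteq Z$, so only the coordinates in $S$ can spoil non-degeneracy; and for $i\in S$ the restrictions $c_{i}|_{X'}$ and $c_{i}|_{Y'}$ are both non-zero (since $c_{i}$ vanishes on $U$ but not on $X$ or $Y$), with $(m-1)$-dimensional kernels $a_{i}\subseteq X'$ and $b_{i}\subseteq Y'$. One checks that $Z$ fails to be non-degenerate exactly when $P\subseteq a_{i}$ and $Q=b_{i}$ for some $i\in S$ (the equality because $\dim Q=\dim b_{i}=m-1$).

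Now count. There are $[m]_{q}$ choices for $P$ (lines in $X'$) and $[m]_{q}$ choices for $Q$ (hyperplanes of $Y'$); for each $i\in S$ the number of forbidden pairs $(P,Q)$ is at most $[m-1]_{q}$, since $Q$ is forced and $P$ runs over the lines of $a_{i}$. Hence a non-degenerate $Z$ exists whenever $[m]_{q}^{2}>|S|\,[m-1]_{q}$. Because $U\subseteq\bigcap_{i\in S}C_{i}$, which has dimension $n-|S|$, one has $|S|\le n-\dim U=n-k+m$, and \eqref{eq1} gives $|S|<(q+1)^{2}+m-2$. The remaining task is the elementary inequality $\bigl((q+1)^{2}+m-2\bigr)[m-1]_{q}\le[m]_{q}^{2}$, which one verifies using also $m\le n-k<(q+1)^{2}-2$; this completes the induction.

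The pinch point — and the source of the bound $(q+1)^{2}+k-2$ — is the case $m=2$: then $[m-1]_{q}=1$ and $[m]_{q}^{2}=(q+1)^{2}$, so there are exactly $(q+1)^{2}$ candidates $Z$ and at most $|S|\le n-k+2$ of them are excluded, leaving room only when $n<(q+1)^{2}+k-2$ (sharply so, by Theorem 2). For $m\ge 3$ the inequality $[m]_{q}^{2}>|S|\,[m-1]_{q}$ holds comfortably, and the main care needed there is the routine verification of this numerical estimate.
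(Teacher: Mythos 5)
Your argument is correct, and it shares the paper's global strategy --- shorten the Grassmann distance one step at a time, with a counting argument at each step to keep the intermediate code non-degenerate --- but the counting argument itself is genuinely different. The paper (its Lemma 1) argues by contradiction at the level of a single hyperplane $H$ of $X$ through $X\cap Y$: if every admissible $Z\supseteq H$ is degenerate, then the coordinate hyperplanes containing $H$ must cover $Y$, and since a space over ${\mathbb F}_q$ cannot be covered by fewer than $q+1$ hyperplanes, at least $q+1$ of the $C_i$ contain $H$; non-degeneracy of $X$ makes these sets of indices pairwise disjoint as $H$ ranges over the $[k-m']_q$ hyperplanes through $X\cap Y$ (writing $m'=\dim(X\cap Y)$), forcing $n-m'\ge[k-m']_q(q+1)$, after which a monotonicity lemma identifies $m'=k-2$ as the worst case. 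You instead parametrize the candidates explicitly as $Z=U\oplus P\oplus Q$ and run a union bound over the coordinates in $S$; your verification of the structure of $Z$ (that $Z\cap X=U\oplus P$, $Z\cap Y=U\oplus Q$, and that degeneracy at $i\in S$ forces $P\subseteq a_i$ and $Q=b_i$) is sound, as is the bound $|S|\le n-k+m$. The two estimates coincide exactly at the critical case $\dim(X\cap Y)=k-2$ (your $m=2$), which is why both deliver the sharp threshold $(q+1)^2+k-2$; for longer distances your threshold $[m]_q^2/[m-1]_q$ is slightly weaker than the paper's $[m]_q(q+1)$ but still amply sufficient. One small simplification: the auxiliary constraint $m\le n-k<(q+1)^2-2$ is not actually needed for your final inequality, since $\bigl((q+1)^2+m-2\bigr)[m-1]_q\le[m]_q^2$ holds for all $m\ge2$ and $q\ge2$ --- it is an equality at $m=2$, and for $m\ge3$ it follows from $[m]_q^2/[m-1]_q>(q+1)q^{m-1}+[m-1]_q$ together with $[m-1]_q\ge m-1$ and $q^{m-1}\ge q+1$. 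A last remark: the paper's sharper covering lemma is also the structural template that its Theorem 2 examples realize (each hyperplane of $X$ lying in exactly $q+1$ coordinate hyperplanes that jointly cover $Y$), so the weaker union bound, while enough for Theorem 1, would not by itself suggest the extremal configurations.
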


\begin{theorem}
If $m$ is a number satisfying $m(n,k)\le m\le k-2$ and
$$n\ge [k-m]_q\cdot(q+1)+m$$
then there exist $X,Y\in {\mathcal C}(n,k)_{q}$ such that
$$d(X,Y)=k-m\;\mbox{ and }\; d_{c}(X,Y)=k-m+1.$$
\end{theorem}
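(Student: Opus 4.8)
Since $d_{c}(X,Y)\le d(X,Y)+1$ always holds (Corollary 1), it is enough to construct codes $X,Y\in\mathcal{C}(n,k)_{q}$ with $\dim(X\cap Y)=m$ — so that $d(X,Y)=k-m$ — for which \emph{no} path of length $k-m$ from $X$ to $Y$ in $\Gamma_{k}(V)$ lies inside $\mathcal{C}(n,k)_{q}$; then $d_{c}(X,Y)=k-m+1$. Write $s=k-m$ (so $s\ge 2$), $W=X\cap Y$, $U=X+Y$. Any path $X=Z_{0},Z_{1},\dots,Z_{s}=Y$ of length $s=d(X,Y)$ is a geodesic, and the usual estimates $i\le s-d(Z_{i},Y)\le d(X,Z_{i})\le i$ give $\dim(X\cap Z_{i})=k-i$ and $\dim(Y\cap Z_{i})=m+i$; since $(X\cap Z_{i})+(Y\cap Z_{i})\subseteq Z_{i}$ has dimension $k$ while $(X\cap Z_{i})\cap(Y\cap Z_{i})\subseteq W$, one gets $W\subseteq Z_{i}\subseteq U$ and $Z_{i}=(X\cap Z_{i})+(Y\cap Z_{i})$. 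For $i=1$ this means $Z_{1}=H+K$, where $H$ is a hyperplane of $X$ containing $W$ and $K\subseteq Y$ has dimension $m+1$ and contains $W$ (equivalently, mod $W$, $Z_{1}/W$ is the direct sum of a hyperplane of $X/W$ and a line of $Y/W$). Hence it suffices to produce $X,Y$ for which \emph{every} $k$-subspace $H+K$ of this form is degenerate, i.e.\ lies in some $C_{i}$.

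Set $N=[s]_{q}(q+1)$; note $N\ge 3s$, so $n\ge N+m\ge 2k-m$, which (together with $m(n,k)\le m$) is exactly the room needed for $\dim U=2k-m\le n$. Take $W=\langle e_{1},\dots,e_{m}\rangle$, so that the coordinate functionals vanishing on $W$ are precisely $c_{m+1},\dots,c_{n}$, of which there are $n-m\ge N$. The combinatorial input is: $q+1$ hyperplanes suffice to cover all $1$-dimensional subspaces of $\mathbb{F}_{q}^{s}$ — if $\dim D=s-2$ then every line $\ell$ has $\dim(\ell+D)\le s-1$, hence $\ell$ lies in one of the $q+1$ hyperplanes through $D$. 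Fix an enumeration $H_{1},\dots,H_{[s]_{q}}$ of all hyperplanes of $\mathbb{F}_{q}^{s}$, with nonzero normal covectors $u_{1},\dots,u_{[s]_{q}}$, and for each $a$ fix a pencil $\mathcal{P}_{a}$ of $q+1$ hyperplanes through some $(s-2)$-subspace $D_{a}$, with normal covectors $v_{1}^{a},\dots,v_{q+1}^{a}$, choosing the $D_{a}$'s so that $\bigcup_{a}D_{a}^{\perp}$ spans $(\mathbb{F}_{q}^{s})^{*}$ (possible since $[s]_{q}\ge s$). Index $N$ of the functionals $c_{m+1},\dots,c_{n}$ by pairs $(a,b)$ and define $x_{1},\dots,x_{s},y_{1},\dots,y_{s}\in\langle e_{m+1},\dots,e_{n}\rangle$ by $c_{(a,b)}(x_{i})=(u_{a})_{i}$ and $c_{(a,b)}(y_{i})=(v_{b}^{a})_{i}$, and on any remaining coordinates by, say, $c_{j}(x_{1})=c_{j}(y_{1})=1$ and $c_{j}(x_{i})=c_{j}(y_{i})=0$ for $i>1$. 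Put $X=W+\langle x_{1},\dots,x_{s}\rangle$ and $Y=W+\langle y_{1},\dots,y_{s}\rangle$.

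One then checks the following. Non-degeneracy: every $c_{j}$ is non-zero on $X$ and on $Y$, since $c_{1},\dots,c_{m}$ are non-zero on $W$, and for $j>m$ the restriction of $c_{(a,b)}$ to the basis $x_{1},\dots,x_{s}$ (resp.\ $y_{1},\dots,y_{s}$) is the nonzero covector $u_{a}$ (resp.\ $v_{b}^{a}$). Dimension: $\dim X=\dim Y=k$, because $u_{1},\dots,u_{[s]_{q}}$ span $(\mathbb{F}_{q}^{s})^{*}$ and $\{v_{b}^{a}:a,b\}$ does too (for each $a$ they span $D_{a}^{\perp}$). Intersection: $X\cap Y=W+(\langle x_{1},\dots,x_{s}\rangle\cap\langle y_{1},\dots,y_{s}\rangle)$, and the $2s$ vectors $x_{i},y_{i}$ can be arranged to be linearly independent, so $\dim(X\cap Y)=m$. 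Finally, every $Z=H+K$ of the critical form is degenerate: write the hyperplane $H/W$ of $X/W$ as $\ker u_{a}$, and, using the covering statement, put the line $K/W$ of $Y/W$ inside some $\ker v_{b}^{a}\in\mathcal{P}_{a}$; then $c_{(a,b)}$ kills $W$, $H$ and $K$, hence $Z$. Combining this with the first paragraph, no geodesic from $X$ to $Y$ stays in $\mathcal{C}(n,k)_{q}$, so $d_{c}(X,Y)=k-m+1$.

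The step I expect to be genuinely delicate is assembling the last paragraph: one must use the $[s]_{q}(q+1)$ coordinates vanishing on $W$ \emph{both} to kill all the intermediate subspaces $H+K$ \emph{and} to keep $X,Y$ honestly $k$-dimensional with $X\cap Y$ of dimension exactly $m$. This is tightest when $n=[s]_{q}(q+1)+m$ and $s\ge 3$, since then there are no spare coordinates: all coordinates beyond the $m$ spanning $W$ are forced into the covering pattern, and one must let the pencils $\mathcal{P}_{a}$ vary with $a$ so that $y_{1},\dots,y_{s}$ still span an $s$-space transverse to $\langle x_{1},\dots,x_{s}\rangle$ (there is enough freedom because $[s]_{q}>2s$, but it needs to be spelled out). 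The clean description of geodesics used in the first paragraph is routine but should be stated, since it is precisely the reason why only the first intermediate subspace $Z_{1}$ has to be attacked.
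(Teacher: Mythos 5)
Your construction is sound and does prove the theorem, but it is organized quite differently from the paper's. The paper proceeds in stages: an explicit pair of generator matrices for $m=0$ and $n=[k]_q(q+1)$ (for $X$, all projective points of $\mathbb{F}_q^k$ as columns, each repeated $q+1$ times; for $Y$, a band matrix built from the $k=2$ example), then an $I_m$ block is appended to realize general $m$, and finally all-ones columns are appended to realize general $n$. You instead build $X$ and $Y$ in one step for arbitrary $m$ and $n$ by working dually: each of the $[s]_q(q+1)$ coordinates transverse to $W$ is labelled by a pair consisting of a hyperplane $\ker u_a$ of $X/W$ and a member $\ker v_b^a$ of a pencil covering $Y/W$, and the single combinatorial fact that a pencil of $q+1$ hyperplanes covers $\mathbb{F}_q^s$ does all the work. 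This is the same underlying mechanism as the paper's (block $a$ of the paper's $G_Y$ is exactly a pencil through the annihilator of $\langle e_a^{*},e_{a+1}^{*}\rangle$ in disguise), but your version is more uniform and avoids the three-step reduction. Your reduction to attacking only $Z_1$, via the dimension count showing $W\subseteq Z_i$ and $Z_1=H+K$, matches the paper's Lemma 6 argument and is correct, as are the non-degeneracy and degeneracy checks.

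The one step you leave open --- that $\langle x_1,\dots,x_s\rangle\cap\langle y_1,\dots,y_s\rangle=0$, so that $\dim(X\cap Y)$ is exactly $m$ --- does hold, but not for the reason you sketch: the inequality $[s]_q>2s$ fails for $s=2$ and $q\in\{2,3\}$, and counting vectors against coordinates is not what is needed anyway. The correct argument costs nothing extra: suppose $\sum\lambda_ix_i+\sum\mu_iy_i=0$ with $\mu\ne0$. Since $\bigcap_aD_a=0$ (which you already impose to make $\dim Y=k$), choose $a$ with $\mu\notin D_a$. Then $v\mapsto v(\mu)$ is a nonzero linear functional on the two-dimensional space $D_a^{\perp}$, so among the $q+1$ values $v_b^a(\mu)$ exactly one is $0$ and the remaining $q\ge2$ are nonzero; in particular they cannot all equal the constant $-u_a(\lambda)$, so some coordinate $(a,b)$ of $\sum\lambda_ix_i+\sum\mu_iy_i$ is nonzero, a contradiction. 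The case $\mu=0$, $\lambda\ne0$ is handled by the $u_a$ spanning $(\mathbb{F}_q^s)^{*}$. With this paragraph inserted, your proof is complete.
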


If  \eqref{eq1} does not hold then for $m=k-2$
Theorem 2 implies the existence of  $X,Y\in {\mathcal C}(n,k)_{q}$
such that $d(X,Y)=2$ and $d_{c}(X,Y)=3$.
So, we get the following.

\begin{cor}
For every pair $X,Y\in {\mathcal C}(n,k)_{q}$
the distance in $\Gamma(n,k)_{q}$ is equal to the distance in $\Gamma_{k}(V)$ 
if and only if \eqref{eq1} holds.
\end{cor}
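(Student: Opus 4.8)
The plan is to obtain this equivalence as an immediate consequence of Theorems 1 and 2; the only real content is matching up the quantitative hypotheses of the two theorems.

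For the ``if'' direction, suppose that \eqref{eq1} holds. Then Theorem 1 says directly that $d_{c}(X,Y)=d(X,Y)$ for all $X,Y\in{\mathcal C}(n,k)_{q}$, so the distances in $\Gamma(n,k)_{q}$ and $\Gamma_{k}(V)$ coincide everywhere.

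For the ``only if'' direction I would argue by contraposition: assume \eqref{eq1} fails, i.e. $n\ge (q+1)^{2}+k-2$, and exhibit a pair realizing distinct distances. The idea is to apply Theorem 2 with $m=k-2$, so I only need to check that this value of $m$ is admissible. The requirement $m\le k-2$ holds with equality. The requirement $m(n,k)\le m$ reads $m(n,k)=k-\min\{k,n-k\}\le k-2$, i.e. $\min\{k,n-k\}\ge 2$, which is guaranteed by the standing assumption $1<k<n-1$ (forcing $k\ge 2$ and $n-k\ge 2$). Finally, since $[2]_{q}=(q^{2}-1)/(q-1)=q+1$, the arithmetic hypothesis $n\ge [k-m]_{q}\cdot(q+1)+m$ becomes $n\ge (q+1)^{2}+k-2$, which is exactly what we assumed. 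Theorem 2 then produces $X,Y\in{\mathcal C}(n,k)_{q}$ with $d(X,Y)=k-m=2$ and $d_{c}(X,Y)=3$, so the two distances differ for this pair; this establishes the contrapositive.

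There is essentially no obstacle here beyond the bookkeeping: the one point to be mildly careful about is the inequality $m(n,k)\le k-2$ needed to legitimize the choice $m=k-2$ in Theorem 2, and this is precisely where the hypothesis $1<k<n-1$ (rather than merely $k<n$) is used, since it is what yields $\min\{k,n-k\}\ge 2$.
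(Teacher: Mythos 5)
Your argument is correct and is exactly the paper's: Theorem 1 gives the ``if'' direction, and the ``only if'' direction follows by applying Theorem 2 with $m=k-2$, using $[2]_q=q+1$ so that the hypothesis of Theorem 2 becomes precisely the negation of \eqref{eq1}. Your added checks (that $m(n,k)\le k-2$ follows from $1<k<n-1$) are the right bookkeeping, which the paper leaves implicit.
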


\section{Proof of Theorem 1}
Our proof is based on the following.

\begin{lemma}\label{lem1}
Let $X$ and $Y$ be non-adjacent elements of ${\mathcal C}(n,k)_{q}$. 
Suppose that 
there exist no $Z\in {\mathcal C}(n,k)_{q}$ satisfying the following conditions:
\begin{enumerate}
\item[(1)] $X$ and $Z$ are adjacent,
\item[(2)] $d(Z,Y)=d(X,Y)-1$.
\end{enumerate}
Then
$$n-m\ge [k-m]_q\cdot(q+1),$$
where $m=\dim(X\cap Y)$.
\end{lemma}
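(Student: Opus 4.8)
The plan is to describe exactly which neighbours of $X$ lie one step closer to $Y$, observe that the hypothesis forces many coordinate hyperplanes $C_i$ to contain these neighbours, and then count. Write $W=X\cap Y$ and $m=\dim W$, so that $d(X,Y)=k-m\ge 2$ and $\dim(X+Y)=2k-m$. The first step is to check that $Z\in{\mathcal G}_{k}(V)$ is adjacent to $X$ with $d(Z,Y)=d(X,Y)-1$ precisely when $Z=H+\langle v\rangle$ for a hyperplane $H$ of $X$ with $W\subseteq H$ and a vector $v\in(H+Y)\setminus H$; equivalently, such $Z$ correspond to pairs $(H,L)$ consisting of a hyperplane $H$ of $X$ containing $W$ and a $1$-dimensional subspace $L$ of the $(k-m)$-dimensional quotient $(H+Y)/H$. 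This is a dimension count: when $W\not\subseteq H$ one has $\dim(H+Y)=2k-m$, forcing $\dim(Z\cap Y)\le m$ and hence $d(Z,Y)\ge d(X,Y)$; when $W\subseteq H$ and $v\in H+Y$ one gets $\dim(Z\cap Y)=m+1$, the upper bound coming from $X\cap Z\cap Y\subseteq W$. Note there are $[k-m]_q$ admissible hyperplanes $H$ (these are the hyperplanes of $X/W$) and, for each, $[k-m]_q$ lines $L$.

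By the hypothesis of the lemma none of these $Z=Z(H,L)$ can be non-degenerate, so each is contained in some $C_i$. For a hyperplane $H$ of $X$ with $W\subseteq H$ set $I_H=\{\,i:X\cap C_i=H\,\}$. Two remarks carry the argument. If $Z(H,L)\subseteq C_i$ then $H\subseteq X\cap C_i$, and since $X$ is non-degenerate $X\cap C_i$ is a hyperplane of $X$, so $X\cap C_i=H$; thus the coordinate witnessing degeneracy of $Z(H,L)$ always belongs to $I_H$, whatever $L$ is. Secondly, for $i\in I_H$ the inclusion $H+\langle v\rangle\subseteq C_i$ is equivalent to $L\subseteq D_i:=(C_i\cap(H+Y))/H$, and $D_i$ is a \emph{hyperplane} of $(H+Y)/H$: indeed $H+Y\subseteq C_i$ would give $Y\subseteq C_i$, contradicting non-degeneracy of $Y$, so $\dim\bigl(C_i\cap(H+Y)\bigr)=2k-m-2$ and $\dim D_i=k-m-1$.

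Now the counting. For fixed $H$, each of the $[k-m]_q$ lines of $(H+Y)/H$ lies in $D_i$ for some $i\in I_H$, while each $D_i$ contains only $[k-m-1]_q$ lines; hence
\[
 |I_H|\ \ge\ \frac{[k-m]_q}{[k-m-1]_q}\ =\ \frac{q^{k-m}-1}{q^{k-m-1}-1}\ >\ q ,
\]
so $|I_H|\ge q+1$. The sets $I_H$ are pairwise disjoint, and their union is $\{\,i:W\subseteq C_i\,\}$. On the other hand the functionals $c_1,\dots,c_n$ restrict to a spanning family of the $m$-dimensional space $W^{*}$, so at least $m$ of them are non-zero on $W$, i.e.\ $|\{\,i:W\not\subseteq C_i\,\}|\ge m$. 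Summing over the $[k-m]_q$ hyperplanes $H$ of $X$ containing $W$,
\[
 n\ =\ \bigl|\{\,i:W\subseteq C_i\,\}\bigr|+\bigl|\{\,i:W\not\subseteq C_i\,\}\bigr|\ \ge\ \sum_{H}|I_H|+m\ \ge\ [k-m]_q\,(q+1)+m ,
\]
which is the desired inequality $n-m\ge [k-m]_q\,(q+1)$.

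I expect the first step — the exact description of the distance-decreasing neighbours, and in particular the fact that they all contain a common hyperplane $H$ of $X$ with $W\subseteq H$ — to be the crux, since it is what makes the sets $I_H$ manageable. The extra summand $m$ (improving a bare $n\ge[k-m]_q(q+1)$) comes only from the remark that the $c_i$ span $W^{*}$; beyond that the proof is a clean double count with Gaussian coefficients, and the inequality $q^{k-m}-1>q(q^{k-m-1}-1)$ is what turns ``$>q$'' into ``$\ge q+1$''.
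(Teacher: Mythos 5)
Your proof is correct and follows essentially the same strategy as the paper's: each of the $[k-m]_q$ hyperplanes $H\supseteq X\cap Y$ of $X$ forces at least $q+1$ coordinate hyperplanes $C_i\supseteq H$, these families are disjoint because $X$ is non-degenerate, and at most $n-m$ of the $C_i$ contain $X\cap Y$. The only (cosmetic) difference is that you obtain the bound $|I_H|\ge q+1$ by counting lines of $(H+Y)/H$ covered by the hyperplanes $D_i$, whereas the paper argues that the hyperplanes $Y\cap C_i$ must cover $Y$ and a $k$-dimensional space over $\mathbb{F}_q$ cannot be covered by fewer than $q+1$ hyperplanes.
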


\begin{proof}
If $Z\in {\mathcal G}_{k}(V)$ satisfies $(1)$ and $(2)$ 
then it intersects $X$ in a hyperplane containing $X\cap Y$.
There are precisely $ [k-m]_q$ 
such hyperplanes in $X$.
Let $H$ be one of them.
By our assumption, 
every $Z \in {\mathcal G}_{k}(V)$ containing $H$ and satisfying $(2)$ 
is contained in at least one of the hyperplanes $C_i$. 
Since $Y\in{\mathcal C}(n,k)_{q}$, 
each $C_i$ intersects $Y$ in a hyperplane. 
It is clear that the minimal number of hyperplanes in $Y$ 
whose union coincides with $Y$ is equal to $q+1$.
This guarantees the existence of at least $q+1$ distinct $C_i$ containing $H$.
Indeed, if this number is less than $q+1$
then there is a vector $x\in Y\setminus X$ such that 
the subspace spanned by $H$ and $x$
belongs to ${\mathcal C}(n,k)_{q}$ which contradicts our assumption. 

So, every hyperplane of $X$ containing $X\cap Y$ is contained in at least 
$q+1$ distinct $C_{i}$.
Since $X\in{\mathcal C}(n,k)_{q}$, there is no $C_i$ which contains two distinct hyperplanes of $X$.
Thus there are at least 
$$[k-m]_q\cdot(q+1)$$
distinct $C_{i}$ containing $X\cap Y$.

On the other hand, $\dim(X\cap Y)=m$ and there are at most $n-m$ distinct $C_{i}$
containing $X\cap Y$. This implies the required inequality.
\end{proof}

\begin{lemma}\label{lem2}
Let $X$ and $Y$ be non-adjacent elements of ${\mathcal C}(n,k)_{q}$
and let $m$ be the dimension of $X\cap Y$.  If 
$$n<  [k-m]_q\cdot(q+1)+m$$
then 
there exists $Z\in {\mathcal C}(n,k)_{q}$ adjacent to $X$ and such that
$$d(Z,Y)=d(X,Y)-1.$$
\end{lemma}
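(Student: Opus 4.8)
The plan is to observe that Lemma 2 is simply the \emph{contrapositive} of Lemma 1, so the argument is pure bookkeeping rather than fresh combinatorics. Both statements carry the standing hypothesis that $X$ and $Y$ are non-adjacent members of $\mathcal{C}(n,k)_q$, and both use the same quantity $m=\dim(X\cap Y)$. The conclusion of Lemma 2 — the existence of some $Z\in\mathcal{C}(n,k)_q$ that is adjacent to $X$ and satisfies $d(Z,Y)=d(X,Y)-1$ — is word for word the negation of the clause ``there exist no $Z\in\mathcal{C}(n,k)_q$ satisfying $(1)$ and $(2)$'' that appears among the hypotheses of Lemma 1; and the inequality $n<[k-m]_q\cdot(q+1)+m$ assumed in Lemma 2 is precisely the negation of the inequality $n-m\ge[k-m]_q\cdot(q+1)$ that Lemma 1 produces.

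Accordingly, I would argue by contradiction. Assume $X,Y$ are non-adjacent in $\mathcal{C}(n,k)_q$ and that $n<[k-m]_q\cdot(q+1)+m$, and suppose, contrary to the claim, that no $Z\in\mathcal{C}(n,k)_q$ adjacent to $X$ satisfies $d(Z,Y)=d(X,Y)-1$. Then $X$ and $Y$ meet all the hypotheses of Lemma 1, so that lemma yields $n-m\ge[k-m]_q\cdot(q+1)$, i.e. $n\ge[k-m]_q\cdot(q+1)+m$, contradicting the assumed strict inequality. Hence such a $Z$ must exist, which is exactly the assertion of Lemma 2.

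I do not expect any genuine obstacle here, since all the substantive content — the count of the $[k-m]_q$ hyperplanes of $X$ through $X\cap Y$, the ``$q+1$ hyperplanes cover $Y$'' observation, and the fact that no $C_i$ contains two distinct hyperplanes of $X$ — was already carried out in the proof of Lemma 1. The only points needing a moment of care are formal: that ``$X$ and $Z$ are adjacent'' and ``$Z$ is adjacent to $X$'' name the same symmetric relation; that $[k-m]_q=(q^{k-m}-1)/(q-1)$ is a genuine positive integer in the relevant range (indeed $k-m\ge 2$ because $X$ and $Y$ are non-adjacent, so $d(X,Y)-1\ge 1$ and the target distance is meaningful); and that condition $(2)$ of Lemma 1 is literally the distance condition $d(Z,Y)=d(X,Y)-1$ appearing here. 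Once these identifications are recorded, the implication is immediate, and this is the form of the statement that will be iterated in the proof of Theorem 1.
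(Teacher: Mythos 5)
Your proof is correct and is exactly what the paper does: the paper's own proof of this lemma is the single line ``Follows directly from Lemma~\ref{lem1},'' and your contrapositive/contradiction argument is the intended content of that line, spelled out. No further comment is needed.
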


\begin{proof}
Follows directly from Lemma \ref{lem1}.
\end{proof}

\begin{lemma}\label{lem3}
For every number $m\le k-2$ we have
$$[k-m]_{q}(q+1)+m\ge (q+1)^{2}+k-2.$$
\end{lemma}

\begin{proof}
Consider the function 
$$f(x)=\frac{q+1}{q-1}(q^{k-x}-1)+x.$$
The inequality follows from the fact that
$$f'(x)=1-\ln(q)\frac{q+1}{q-1}q^{k-x}<0$$
if $x\le k-2$.
\end{proof}

Now we prove Theorem 1.
Let $X$ and $Y$ be non-adjacent elements of ${\mathcal C}(n,k)_{q}$.
Suppose that $\dim(X\cap Y)=m$.
Then 
$$d(X,Y)=k-m\ge 2,$$
i.e. $m\le k-2$.
By Lemma \ref{lem3}, the inequality
$$n< (q+1)^{2}+k-2$$
implies the inequality from Lemma \ref{lem2}.
Thus there exists $X_1\in {\mathcal C}(n,k)_{q}$ adjacent to 
$X$ and such that 
$$d(X_{1},Y)=d(X,Y)-1.$$
Applying the same arguments to $X_{1}$ and $Y$,
we obtain $X_2\in {\mathcal C}(n,k)_{q}$ adjacent to $X_1$ 
and satisfying
$$d(X_{2},Y)=d(X_{1},Y)-1=d(X,Y)-2.$$
Step by step, we construct a path 
$$X,X_{1},\dots,X_{k-m}=Y,$$
where every $X_i$ belongs to ${\mathcal C}(n,k)_{q}$. 
So, $d_{c}(X,Y)=k-m$.

\section{Proof of Theorem 2}
We consider two preliminary examples and prove Theorem 2 in several steps.
Our first example concerns the case when $q=2$, $k=2$ and $n=9$.

\begin{exmp}{\rm
Let $X$ and $Y$ be the linear $[9,2]_{2}$ codes  whose non-zero vectors are
$$\left[
\begin{array}{c}
v_1\\
v_2\\
v_{1}+v_{2}
\end{array}
\right]=\left[
\begin{array}{ccccccccc}
0&0&0&1&1&1&1&1&1\\
1&1&1&0&0&0&1&1&1\\
1&1&1&1&1&1&0&0&0\\
\end{array}
\right]$$
and 
$$\left[
\begin{array}{c}
u_1\\
u_2\\
u_{1}+u_{2}
\end{array}
\right]=\left[
\begin{array}{ccccccccc}
0&1&1&0&1&1&0&1&1\\
1&0&1&1&0&1&1&0&1\\
1&1&0&1&1&0&1&1&0\\
\end{array}
\right],$$
respectively.
Observe that 
$$v_{1}\in C_{1}\cap C_{2}\cap C_{3},\;\;v_{2}\in C_{4}\cap C_{5}\cap C_{6},\;\;
v_{1}+v_{2}\in C_{7}\cap C_{8} \cap C_{9}$$
and 
$$u_{1}\in C_{1}\cap C_{4}\cap C_{7},\;\;u_{2}\in C_{2}\cap C_{5}\cap C_{8},\;\;
u_{1}+u_{2}\in C_{3}\cap C_{6} \cap C_{9}.$$
This means that every pair of non-zero vectors $v\in X$ and $u\in Y$
is contained in a certain $C_{i}$, i.e.
there is no linear $[9,2]_{2}$ code which has non-zero intersections with both $X$ and $Y$.
So, $d(X,Y)=2$ and $d_{c}(X,Y)=3$.
}\end{exmp}

The above construction can be adapted to the case of an arbitrary finite field.

\begin{exmp}{\rm
Suppose that $k=2$ and $n=(q+1)^{2}$.
The number $q$ is an arbitrary prime power.
Let $X$ be the linear $[n,2]_{q}$ code generated by the following vectors
$$v_1=
(\,\underbrace{0,\dots,0}_{q+1},1,\dots,1\,)$$
and
$$v_2=
(\,\underbrace{1,\dots,1}_{q+1},\underbrace{0,\dots, 0}_{q+1},x_{0},\dots,x_{q-2}),$$
where 
$$x_{i}=\underbrace{-\alpha^{-i}, \dots, -\alpha^{-i}}_{q+1}\,,\;\;\;i\in \{0,\dots,q-2\}$$
and $\alpha$ is a primitive element of ${\mathbb F}_{q}$.
\iffalse
$$v_2=
(\,\underbrace{1,\dots,1}_{q+1},\underbrace{0,\dots, 0}_{q+1},
\underbrace{\dots,\underbrace{-\alpha^{-i}, \dots, -\alpha^{-i}}_{q+1},\dots}_{(q-1)(q+1)}\,),$$
where $i\in \{0,\dots,q-2\}$ and
$\alpha$ is a primitive element of ${\mathbb F}_{q}$.
\fi 
Other non-zero vectors of $X$ are scalar multiples of
$$v_{1},\,v_{2},\,v_{3}=v_{1}+v_{2},\,v_{4}= v_{1}+\alpha v_{2},
\dots, v_{q+1}=v_1+\alpha^{q-2} v_2.$$ 
For every $v_i$ the following coordinates are zero: 
$$1+(i-1)(q+1),\;2+(i-1)(q+1),\dots,q+1+(i-1)(q+1)$$
(this is obvious for $i=1,2$ and for other $i$ this coordinates are
$1-\alpha^{i-3}\cdot \frac{1}{\alpha^{i-3}}=0$).
Consider the linear $[n,2]_{q}$ code $Y$ generated by the following vectors
$$u_1=(\,\underbrace{y,\dots,y}_{q+1}\,), \mbox{ where } y=\underbrace{0,1,\dots, 1}_{q+1},$$
and
$$u_2=
(\,\underbrace{z,\dots,z}_{q+1}\,), \mbox{ where } 
z=(1,0,-1,-\alpha^{-1},\dots,-\alpha^{-(q-2)}).$$
Other non-zero vectors of $Y$ are scalar multiples of
$$u_{1},\,u_{2},\,u_{3}=u_{1}+u_{2},\,u_{4}= u_{1}+\alpha u_{2},
\dots, u_{q+1}=u_1+\alpha^{q-2} u_2.$$ 
We check that for every $u_j$
the following coordinates are zero: 
$$j,\;j+(q+1),\dots,j+q(q+1).$$
It is easy to see that for every pair $i,j$
the $2$-dimensional subspace spanned by $v_{i}$ and $u_{j}$
is contained in the hyperplane $C_l$ for 
$$l=(i-1)(q+1)+j.$$
We have $X\cap Y=0$, since in every vector of $X$ the first $q+1$ coordinates are coincident
and for every non-zero vector of $Y$ the same fails.
As in the previous example, $d(X,Y)=2$ and $d_{c}(X,Y)=3$.
}\end{exmp}

Now we consider the case when
$$2<k\le n-k\;\mbox{ and }\;n=[k]_{q}\cdot (q+1).$$
The $k$-dimensional vector space ${\mathbb F}^{k}_{q}$
contains precisely $[k]_{q}$ vectors whose first non-zero coordinate is $1$. 
We denote these vectors by $w_{1},\dots,w_{[k]_{q}}$
and write $G_{X}$ for the $(k\times n)$-matrix 
$$[X_{1}\dots X_{[k]_{q}}],$$
where every $X_{i}$, $i\in \{1,\dots,[k]_{q}\}$ is the matrix of dimension $k\times(q+1)$
whose columns are the vector $w_{i}$.
Let $y$ and $z$ be as in Example 2.
We define the $(k\times n)$-matrix 
$$G_{Y}=\left[
\begin{array}{cccccccccc}
y&0&0&\cdots&0&0&\cdots&0\\
z&y&0&\cdots&0&0&\cdots&0\\
0&z&y&\cdots&0&0&\cdots&0\\
\vdots&\vdots&\vdots&\ddots&\vdots&\vdots&\ddots&\vdots \\
0&0&0&\cdots&z&y&\cdots&y\\
0&0&0&\cdots&0&z&\cdots&z\\
\end{array}
\right] .$$

\begin{lemma}
If $X$ and $Y$ are the linear $[n,k]_{q}$ codes
whose generator matrices are $G_{X}$ and 
$G_{Y}$\footnote{If $C$ is the linear $[n,k]_{q}$ code spanned by vectors $v_{1},\dots,v_{k}$
then the corresponding generator matrix of $C$ is the $(k\times n)$-matrix 
whose rows are the vectors $v_{1},\dots,v_{k}$.}, respectively,
then 
$$d(X,Y)=k\;\mbox{ and }\;d_{c}(X,Y)=k+1.$$
\end{lemma}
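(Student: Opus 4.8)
The plan is to treat the two equalities separately: $d(X,Y)=k$ is routine, while $d_{c}(X,Y)=k+1$ rests on a single covering property of the hyperplanes $C_{i}$.

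\emph{Setup and the value $d(X,Y)$.} First I would coordinatize $X$ and $Y$ by $\mathbb{F}_{q}^{k}$ through $\mathbf{a}\mapsto\mathbf{a}^{T}G_{X}$ and $\mathbf{b}\mapsto\mathbf{b}^{T}G_{Y}$, and identify the restricted coordinate functionals. On $X$ the functional $c_{j}$ becomes $\mathbf{a}\mapsto\langle\mathbf{a},w_{i}\rangle$, where $i$ is the block of columns of $G_{X}$ containing the $j$-th column; since $w_{1},\dots,w_{[k]_{q}}$ represent all one-dimensional subspaces of $\mathbb{F}_{q}^{k}$, these functionals realize every nonzero functional up to a scalar, each exactly $q+1$ times, so $X$ is non-degenerate and the hyperplanes $C_{j}\cap X$ are precisely all hyperplanes of $X$, each arising for the $q+1$ indices $j$ of one block. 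On $Y$ the structure of $G_{Y}$ shows that $c_{j}$ depends only on two consecutive parameters $b_{i},b_{i+1}$ (on $b_{k-1},b_{k}$ for the repeated blocks at the right end of $G_{Y}$): it sends $\mathbf{b}$ to $b_{i}y_{t}+b_{i+1}z_{t}$ for the appropriate position $t$ inside the block, and in particular $Y$ is non-degenerate and its rows are independent. To get $X\cap Y=0$ I would use that every vector of $X$ is constant on each block of $q+1$ coordinates, while for a nonzero vector of $Y$ this fails: scanning the blocks of $G_{Y}$ from left to right and using linear independence of $\{(1,\dots,1),y,z\}$ (valid for every prime power $q$) forces $b_{1}=b_{2}=0$, then $b_{3}=0$, and so on, so $\mathbf{b}=0$. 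Hence $\dim(X\cap Y)=0$ and, since $k\le n-k$ makes $k$ the diameter of $\Gamma_{k}(V)$, $d(X,Y)=k$.

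\emph{The covering property.} The crucial observation is: for each block $i$ the $q+1$ hyperplanes $C_{j}\cap Y$, with $j$ ranging over block $i$, cover $Y$. Indeed, along the block the functionals $c_{j}|_{Y}$ are $b_{i+1}$, $b_{i}$ and $b_{i}-\alpha^{-s}b_{i+1}$ for $s=0,\dots,q-2$; as $\alpha$ is primitive, their kernels are exactly the $q+1$ lines of the coordinate plane spanned by the $i$-th and $(i{+}1)$-st directions, each multiplied by $\mathbb{F}_{q}^{k-2}$, and these lines already exhaust that plane, so the union of the kernels is all of $\mathbb{F}_{q}^{k}$. Then I would deduce $d_{c}(X,Y)=k+1$. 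Since $d_{c}(X,Y)\ge d(X,Y)=k$ and $d_{c}(X,Y)\le d(X,Y)+1$ by Corollary 1, it suffices to exclude $d_{c}(X,Y)=k$. A path of length $k$ from $X$ to $Y$ inside $\Gamma(n,k)_{q}$ would be a geodesic of $\Gamma_{k}(V)$, so the vertex $Z$ adjacent to $X$ on this path satisfies $\dim(Z\cap X)=k-1$ and $d(Z,Y)=k-1$; writing $H=Z\cap X$ and taking $u$ spanning the line $Z\cap Y$, we get $u\in Y\setminus\{0\}$, hence $u\notin X\supseteq H$ because $X\cap Y=0$, and therefore $Z=H+\langle u\rangle$. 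The hyperplane $H$ of $X$ equals $C_{j}\cap X$, so $H\subseteq C_{j}$, for all $j$ in one particular block $i_{0}$; by the covering property, $u\in C_{j_{0}}\cap Y$ for some $j_{0}$ in that block, whence $Z=H+\langle u\rangle\subseteq C_{j_{0}}$, contradicting $Z\in\mathcal{C}(n,k)_{q}$. Thus $d_{c}(X,Y)=k+1$.

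\emph{Main obstacle.} The only delicate part is the bookkeeping for $G_{Y}$: verifying that its coordinate functionals genuinely split into $[k]_{q}$ groups of $q+1$, each group involving only two consecutive parameters and producing \emph{all} $q+1$ lines of the relevant plane — this is exactly where the explicit choice of $y$, $z$ and of a primitive element $\alpha$ is used — and, in parallel, that the local-constancy argument for $X\cap Y=0$ propagates through every block, including the repeated ones at the tail of $G_{Y}$. Once these structural facts are in place, the remainder follows formally from Corollary 1 and the geodesic property of $\Gamma_{k}(V)$.
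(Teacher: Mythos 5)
Your proposal is correct and follows essentially the same route as the paper: identify each hyperplane of $X$ with the $q+1$ coordinate hyperplanes of one column block, show that the $q+1$ coordinate hyperplanes of every block cover $Y$, and conclude that no admissible intermediate vertex $Z=H+\langle u\rangle$ can be non-degenerate. Your block-by-block propagation argument for $X\cap Y=0$ is in fact slightly more careful than the paper's one-line justification (which, for $k>2$, overlooks that vectors such as $u_3,\dots,u_k$ do have constant first block), but the overall strategy and all key steps coincide.
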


To prove Lemma 4 we will use the following observation concerning hyperplanes in 
a $k$-dimensional vector space over ${\mathbb F}_{q}$.
Let $e_{1},\dots,e_{k}$ be a base of this  vector space.
There are precisely $q^{k-1}$ distinct hyperplanes which do not contain $e_{1}$;
every hyperplane of such type is spanned by vectors 
$e'_1,\dots,e'_{k-1}$, where $e'_i=e_{i+1}$ or $e'_i=e_1+a_{i}e_{i+1}$ for 
a certain non-zero scalar $a_{i}$.
For every $l\in \{2,\dots, k-1\}$ there are precisely $q^{k-l}$ hyperplanes which contain 
$e_1,\dots,e_{l-1}$ and do not contain $e_l$;
any of these hyperplanes is spanned by 
$e_1,\dots,e_{l-1}$ and vectors $e'_l,\dots,e'_{k-1}$
where $e'_i=e_{i+1}$ or $e'_i=e_l+a_{i}e_{i+1}$  and $a_i$ is a non-zero scalar.
At the end, we get the unique hyperplane containing $e_{1},\dots,e_{k-1}$. 
So, in this way we have listed all 
$$q^{k-1}+q^{k-2}+\dots +q+1=[k]_q$$
hyperplanes.

\begin{proof}[Proof Lemma 4]
As in Example 2, we have $X\cap Y=0$.
Indeed, in all vectors of $X$ the first $q+1$ coordinates are coincident
and for the non-zero vectors of $Y$ the same fails.
So, $d(X,Y)=k$.

Let $v_{1},\dots,v_{k}$ be the rows of $G_{X}$.
These vectors form a base of $X$.
Consider the hyperplane $H\subset X$ spanned by the vectors
$$v_1+a_2v_2,\;v_1+a_3v_3,\dots,v_1+a_kv_k,$$
where all $a_{i}$ are non-zero. 
There is $t=i(q+1)+1$ such that the columns of $G_X$
with the numbers $t,t+1,\dots,t+q$ are
$$\left[\begin{array}{c}
1\\
-a_2^{-1}\\
-a_3^{-1}\\
\vdots\\
-a_n^{-1}\\
\end{array}
\right]$$
The corresponding coordinates of  the vectors $v_1+a_iv_i$
are equal to $1-a_i\frac{1}{a_i}=0$.
This implies that $H$ is contained in 
the $q+1$ coordinate hyperplanes $C_t,C_{t+1},\dots,C_{t+q}$.

Using the above description of hyperplanes in $X$,
we show that for every hyperplane of $X$ there is $t=i(q+1)+1$
such that this hyperplane is contained in $C_t,C_{t+1},\dots,C_{t+q}$.

Let $u_{1},\dots,u_{k}$ be the rows of $G_{Y}$.
These vectors form a base of $Y$. We immediately see that vectors $u_{3},\dots,u_{k}$
are contained in the intersection of coordinate hyperplanes $C_1,C_2,\dots,C_{q+1}$
and, by Example 2, every vector from the subspace $\langle u_1,u_2\rangle$ 
is contained in one of these coordinate hyperplanes. 
Hence every vector of $Y$ is contained
in one of the coordinate hyperplanes $C_1,C_2,\dots,C_{q+1}.$
Similarly, we establish that all vectors of $Y$ are contained in 
the union of coordinate hyperplanes $C_t,C_{t+1},\dots,C_{t+q}$ for every $t=i(q+1)+1$.

Therefore, for every hyperplane of $X$ and every vector in $Y$
there is  a certain $C_{i}$ containing them, i.e.
there is no $Z\in {\mathcal C}(n,k)_{q}$
adjacent to $X$ and having a non-zero intersection with $Y$.
This means that $d_{c}(X,Y)=k+1$.
\end{proof}

\begin{lemma}
If $$2\le k\le n-k\;\mbox{ and }\;n=[k]_{q}\cdot (q+1)$$
then there exist $X,Y\in {\mathcal C}(n,k)_{q}$
such that 
$$d(X,Y)=k\;\mbox{ and }\;d_{c}(X,Y)=k+1.$$
\end{lemma}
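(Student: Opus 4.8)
The plan is to observe that the lemma is simply a uniform packaging of two cases already handled, namely Example~2 and Lemma~4, and that the only new input is the boundary value $k=2$. So the proof will split on whether $k>2$ or $k=2$.

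For $k>2$: the hypotheses $2<k\le n-k$ and $n=[k]_{q}\cdot(q+1)$ are exactly those of Lemma~4. Hence Lemma~4 directly yields codes $X,Y\in{\mathcal C}(n,k)_{q}$ with $d(X,Y)=k$ and $d_{c}(X,Y)=k+1$, which is the required conclusion.

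For $k=2$: here $[2]_{q}=\frac{q^{2}-1}{q-1}=q+1$, so the condition $n=[k]_{q}\cdot(q+1)$ becomes $n=(q+1)^{2}$, which is precisely the setting of Example~2 (with $q$ an arbitrary prime power). In Example~2 explicit codes $X,Y\in{\mathcal C}(n,2)_{q}$ were constructed with $X\cap Y=0$, so that $d(X,Y)=2$, and with the property that every pair of non-zero vectors $v\in X$, $u\in Y$ lies in a common coordinate hyperplane $C_{l}$, so that $d_{c}(X,Y)=3$. Since $k=2$, these are exactly the asserted equalities $d(X,Y)=k$ and $d_{c}(X,Y)=k+1$.

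Combining the two cases proves the lemma. There is essentially no obstacle: the only point requiring a line of checking is that the standing assumption $2\le k\le n-k$ is consistent in the boundary case $k=2$, where $n-k=(q+1)^{2}-2\ge 2$ for every prime power $q$, and that the parameter $n=(q+1)^{2}$ in Example~2 indeed equals $[k]_{q}\cdot(q+1)$ when $k=2$; both are immediate.
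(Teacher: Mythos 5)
Your proposal is correct and matches the paper's proof, which likewise disposes of the lemma by citing Example~2 for the case $k=2$ and Lemma~4 for the case $k>2$. You have merely spelled out the case split and the parameter check $[2]_q\cdot(q+1)=(q+1)^2$ that the paper leaves implicit.
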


\begin{proof}
The statement follows from Example 2 and Lemma 4.
\end{proof}

\begin{lemma}
If $$n=[k-m]_q\cdot (q+1)+m,$$
where $m$ satisfies $m(n,k)\le m\le k-2$, then there exist $X,Y\in {\mathcal C}(n,k)_{q}$
such that 
$$d(X,Y)=k-m\;\mbox{ and }\;d_{c}(X,Y)=k-m+1.$$
\end{lemma}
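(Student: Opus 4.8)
The plan is to obtain $X$ and $Y$ from the codes constructed for Lemma 5 by adjoining $m$ extra coordinates on which the two codes agree. Put $k'=k-m$ and $n'=n-m$. By hypothesis $k'\ge 2$ and $n'=[k']_q(q+1)$; since $[k']_q\ge k'$ and $q+1\ge 3$ we get $2k'\le n'$, so $2\le k'\le n'-k'$ and Lemma 5 applies to the parameters $(n',k')$. Identify the first $n'$ coordinates of $V$ with $V'=\mathbb{F}_q^{n'}$ and let $X',Y'\in\mathcal{C}(n',k')_q$ be the codes it produces. Besides $X'\cap Y'=0$ (i.e.\ $d(X',Y')=k'$), the argument will use what the proofs of Example 2 (when $k'=2$) and Lemma 4 (when $k'\ge 3$) actually establish for these codes: \emph{every subspace of $V'$ spanned by a hyperplane of $X'$ and a non-zero vector of $Y'$ lies in one of the coordinate hyperplanes of $V'$.}

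Next I would set $W=\langle e_{n'+1},\dots,e_n\rangle$ and define $X=X'\oplus W$ and $Y=Y'\oplus W$; both have dimension $k$. They are non-degenerate: for $i\le n'$ the code $X'$ (resp.\ $Y'$) already contains a vector with non-zero $i$-th coordinate, while for $i>n'$ the vector $e_i$ belongs to $W\subseteq X$ (resp.\ $W\subseteq Y$). Since $X',Y'\subseteq V'$ and $W$ involves only the last $m$ coordinates, $X\cap Y=(X'\cap Y')\oplus W=W$, so $d(X,Y)=k-m$.

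The crux is to show that no $Z\in\mathcal{C}(n,k)_q$ adjacent to $X$ satisfies $d(Z,Y)=k-m-1$: a geodesic of $\Gamma_k(V)$ joining $X$ and $Y$ that lay inside $\mathcal{C}(n,k)_q$ would have to begin with such an edge, so the non-existence forces $d_c(X,Y)>d(X,Y)$, and combined with the inequality $d_c(X,Y)\le d(X,Y)+1$ of Corollary 1 this gives $d_c(X,Y)=k-m+1$. So suppose such a $Z$ exists; then $\dim(X\cap Z)=k-1$ and $\dim(Z\cap Y)=m+1$. Because $(X\cap Z)+(Z\cap Y)\subseteq Z$ we get $\dim(X\cap Z\cap Y)\ge(k-1)+(m+1)-k=m$, and since $X\cap Z\cap Y\subseteq X\cap Y=W$ with $\dim W=m$, this forces $W\subseteq Z$. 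Let $\pi\colon V\to V/W$ be the quotient, identified with $V'$ so that $\pi$ is the identity on $V'$, $\pi(X)=X'$, $\pi(Y)=Y'$, and $\pi(C_i)$ is the $i$-th coordinate hyperplane of $V'$ for each $i\le n'$ (using $W\subseteq C_i$). As $W$ lies in each of $X$, $Y$, $Z$, the image $\bar Z=\pi(Z)$ is $k'$-dimensional with $\dim(\bar Z\cap X')=k'-1$ and $\dim(\bar Z\cap Y')=1$; choosing a non-zero $v\in\bar Z\cap Y'$ (then $v\notin X'$ since $X'\cap Y'=0$) exhibits $\bar Z$ as spanned by the hyperplane $\bar Z\cap X'$ of $X'$ together with the vector $v$ of $Y'$. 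By the emphasized property, $\bar Z\subseteq\pi(C_i)$ for some $i\le n'$, and since $W\subseteq Z$ and $W\subseteq C_i$ this pulls back to $Z=\pi^{-1}(\bar Z)\subseteq\pi^{-1}(\pi(C_i))=C_i$, contradicting $Z\in\mathcal{C}(n,k)_q$.

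Finally, I note that the hypothesis $m(n,k)\le m$ is automatic here: $n=[k-m]_q(q+1)+m$ and $k-m\ge 2$ give $n-k=[k-m]_q(q+1)-(k-m)\ge k-m$, hence $\min\{k,n-k\}\ge k-m$. The part needing care is the bookkeeping around the quotient — checking that adjoining the $W$-coordinates preserves non-degeneracy and, more importantly, that $C_1,\dots,C_{n'}$ project precisely onto the coordinate hyperplanes of $V/W\cong V'$, so that the degeneracy obstruction already established for $(X',Y')$ transports intact to $(X,Y)$; extracting that property in usable form from the (terse) proofs of Lemma 4 and Example 2 is the other place to be careful. The one genuinely clean point is the dimension count $\dim(X\cap Z\cap Y)\ge m$ forcing $W\subseteq Z$, which spares us a case analysis on $\dim(Z\cap W)$.
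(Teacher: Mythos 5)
Your proof is correct and follows essentially the same route as the paper: the same direct-sum construction (your $X=X'\oplus W$ is the paper's block generator matrix with the $I_m$ block), and the same strategy of showing that any first step $Z$ of a hypothetical geodesic in ${\mathcal C}(n,k)_{q}$ would be forced into some $C_i$. The only difference is that you make explicit two points the paper leaves tacit — the dimension count forcing $X\cap Y\subseteq Z$, and the fact that what is really needed from Lemma 5 is the stronger property established in the proofs of Example 2 and Lemma 4 (every subspace spanned by a hyperplane of $X'$ and a non-zero vector of $Y'$ lies in a coordinate hyperplane), not merely the equality $d_{c}(X',Y')=k-m+1$.
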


\begin{proof}
By Lemma 5, there exist linear $[n-m,k-m]_{q}$ codes $X',Y'$
such that 
$$X'\cap Y'=0\;\mbox{ and }\;d_{c}(X',Y')=k-m+1.$$
Let $G_{X'}$ and $G_{Y'}$ be the generator matrices of $X'$ and $Y'$, respectively.
Denote by $X$ and $Y$ the linear $[n,k]_{q}$ codes whose generator matrices are
$$
G_X=\left[
\begin{array}{cc}
G_{X'}&0\\
0&I_m\\
\end{array}
\right]\;\mbox{ and }\;
G_Y=\left[
\begin{array}{cc}
G_{Y'}&0\\
0&I_m\\
\end{array}
\right],$$
respectively.
It is clear that $\dim (X\cap Y)=m$, i.e. $d(X,Y)=k-m$.
Show that
$$d_{c}(X,Y)=k-m+1.$$
If $Z\in {\mathcal G}_{k}(V)$ is adjacent to $X$
and $d(Z,Y)=k-m-1$ then $Z\cap X$ is a hyperplane of $X$,
the subspace $Z\cap Y$ is $(m+1)$-dimensional and 
each of the subspaces contains $X\cap Y$.
Note that every subspace of $X$ or $Y$ containing $X\cap Y$
can be identified with a certain subspace of $X'$ or $Y'$, respectively.
The described above properties of the pair $X',Y'$ guarantee that
for every hyperplane of $X$ containing $X\cap Y$
and every vector in $Y\setminus X$ there is  a certain $C_{i}$ containing them.
This implies that $Z$ does not belong to ${\mathcal C}(n,k)_{q}$ 
and we get the claim.
\end{proof}

To complete the proof, we consider the general case when 
$$n\ge [k-m]_{q}\cdot (q+1)+m$$
and $m$ satisfies $m(n,k)\le m\le k-2$.
Let 
$$n'=[k-m]_q\cdot (q+1)+m.$$ 
Lemma 6 implies the existence of $X',Y'\in C(n',k)$ such that 
$$d(X',Y')=k-m\;\mbox{ and }d_{c}(X',Y')=k-m+1.$$
We denote their generator matrices by $G_{X'}$ and $G_{Y'}$
(respectively) and consider the linear $[n,k]_{q}$ codes 
$X$ and $Y$  corresponding to the extended generator matrices 
$$G_X=[G_{X'}\,\,\mathbf{1}]\;\mbox{ and }\;G_Y=[G_{Y'}\,\,\mathbf{1}],$$
where $\mathbf{1}$ is the matrix of dimension $k\times(n-n')$ whose elements are $1$.
The equality $d(X,Y)=k-m$ is obvious and we show that
$d_{c}(X,Y)=k-m+1$.

Every hyperplane $H\subset X$ containing $X\cap Y$
is an extension of a certain hyperplane $H'\subset X'$
containing $X'\cap Y'$. 
Similarly, every vector $y\in Y\setminus X$
is an extension of a vector $y'\in Y'\setminus X'$.  
Since $d_{c}(X',Y')=k-m+1$, there is a coordinate hyperplane
containing $H'$ and $y'$.
Then $H$ and $y$ are contained in the same coordinate hyperplane.
This implies the required inequality.

\end{document}